\newtheorem{theorem}{Theorem}[section]
\newtheorem{problem}[theorem]{Problem}
\theoremstyle{definition}
\theoremstyle{remark}
\newtheorem{remark}[theorem]{Remark}
\numberwithin{equation}{section}
\newcommand{\C}{{\mathbb C}}
\newcommand{\D}{{\mathbb D}}
\newcommand{\T}{{\mathbb T}}
\newcommand{\cL}{{\mathcal L}}
\newcommand{\cE}{{\mathcal E}}
\newcommand{\cS}{{\mathcal S}}
\newcommand{\cX}{{\mathcal X}}
\newcommand{\cY}{{\mathcal Y}}
\newcommand{\cO}{{\mathcal O}}
\newcommand{\cH}{\mathcal{H}}
\newcommand{\sbm}[1]{\left[\begin{smallmatrix} #1
                \end{smallmatrix}\right]}
\numberwithin{equation}{section}
\begin{document}

\title[Interpolation in de Branges-Rovnyak spaces]
{Interpolation in de Branges-Rovnyak spaces}

\author{Joseph A. Ball}
\address{Joseph A. Ball, Department of Mathematics,
Virginia Tech, Blacksburg, VA 24061-0123, USA}
\email{ball@math.vt.edu}

\author{Vladimir Bolotnikov}
\address{Vladimir Bolotnikov, Department of Mathematics,
The College of William and Mary,
Williamsburg VA 23187-8795, USA}
\email{vladi@math.wm.edu}

\author{Sanne ter Horst}
\address{Sanne ter Horst, Department of Mathematics, Unit for BMI,
North-West University, Potchefstroom 2531, South Africa}
\email{Sanne.TerHorst@nwu.ac.za}

\begin{abstract}
A general interpolation problem with operator argument is studied
for functions $f$ from the de Branges-Rovnyak space $\cH(s)$ associated
with an analytic function $s$ mapping the open unit disk $\D$ into the
closed unit disk.
The interpolation condition is taken in the
Rosenblum-Rovnyak form $f(A)c=b$ (with a suitable interpretation of
$f(A)c$) for given Hilbert space operator $A$ and
two vectors $b,c$ from the same space.
\end{abstract}

\subjclass[2010]{30E05, 47A57}
\keywords{de Branges-Rovnyak space, Schur class function}

\maketitle


\section{Introduction}
The class of all holomorphic functions mapping the unit disk $\D$
into its closure has played a prominent role in function theory and its
applications beginning with the work of I. Schur \cite{Schur};
following now standard terminology, we refer
to this class of functions as the {\em Schur class} and denote it by
${\mathcal S}$.  Since the first results by mathematicians as
 Schur, Carath\'eodory,
Fej\'er, Nevanlinna and Pick in the early part of the last century
with further elaboration for problems involving matrix- and
operator-valued Schur classes to the present, there has now emerged a
rather complete theory for interpolation problems in the Schur class.
Among several alternative characterizations of the Schur class is
one in terms of positive kernels and associated reproducing kernel
Hilbert spaces:  {\em the function $s \colon {\mathbb D} \to {\mathbb
C}$ is in the Schur class ${\mathcal S}$ if and only if the
associated so-called {\em de Branges-Rovnyak kernel}
\begin{equation}
K_s(z,\zeta)=\frac{1-s(z)\overline{s(\zeta)}}{1-z\bar{\zeta}}
\label{1.1}
\end{equation}
is a positive  kernel and thereby gives rise to a reproducing kernel
Hilbert space ${\mathcal H}(K_{s})$} (see \cite{dbr2}). On the other hand
the kernel \eqref{1.1} being positive is equivalent to the operator
$M_s: f\to sf$ of  multiplication by $s$ to be a contraction on the
Hardy space $H^2$ of square summable power series on $\D$;
then the  general complementation theory applied to the
contractive operator
$M_s: \, H^2\to H^2$ provides the characterization of $\cH(K_{s})$ as the
operator range
$\cH(K_s)={\rm Ran}(I-M_sM^*_s)^{\frac{1}{2}}\subset H^2$  with the lifted
norm
$$
\|(I-M_sM^*_s)^{\frac{1}{2}}f\|_{\cH(K_s)}=\|(I-\pi)f \|_{H^2}
$$
where $\pi$ here is the orthogonal projection onto ${\rm
Ker}(I-M_sM^*_s)^{\frac{1}{2}}$. It follows that $\|f\|_{\cH(K_{s})}\ge
\|f\|_{H^2}$ for every $f\in\cH(K_{s})$ and thus de Branges-Rovnyak spaces
are contractively included in $H^2$. Upon setting
$f=(I-M_sM^*_s)^{\frac{1}{2}}h$ in the last formula we get
\begin{equation}
\| (I - M_{s}M_{s}^{*})h\|_{\cH(K_s)}=\langle (I - M_{s} M_{s}^{*})h, \,
h\rangle_{H^2}.
\label{1.2}
\end{equation}
The purpose of this note is to study an interpolation problem of general
type in Rosenblum-Rovnyak form (see \cite{RR})
\begin{equation}  \label{RRint}
 f(A)c = b
 \end{equation}
where the operator $A$ and the
vectors $c$ and $b$ are given) with the unknown function $f$ in a de
Branges-Rovnyak space ${\mathcal H}(K_{s})$ rather than in the Schur
class ${\mathcal S}$. By way of motivation, we note that if we take
the de Branges-Rovnyak data set $(A,c,b)$ to be of the form
$$
 A = \begin{bmatrix} z_{1} & & \\  & \ddots & \\ & & z_{N}
\end{bmatrix}, \quad c = \begin{bmatrix} 1 \\ \ddots \\ 1
\end{bmatrix}, \quad b = \begin{bmatrix} w_{1} \\ \vdots \\ w_{N}
\end{bmatrix}
$$
for some points $z_{1}, \dots, z_{N}$ in the unit disk and complex
values $w_{1}, \dots, w_{N}$, then the interpolation condition
\eqref{RRint} transcribes to
$$
 f(z_{i}) = w_{i} \text{ for } i = 1, \dots, N,
$$
while, if instead we take $(A,b,c)$ to be of the form
$$
  A = \begin{bmatrix} z_{0} & & & \\ 1 & z_{0} & & \\ & \ddots &
  \ddots \\ & & 1 & z_{0} \end{bmatrix}, \quad
  c = \begin{bmatrix} 1 \\ 0 \\ \vdots \\ 0 \end{bmatrix}, \quad
  b = \begin{bmatrix} w_{0} \\ w_{1} \\ \vdots \\ w_{N-1} \end{bmatrix}
$$
(so $A$ is the lower-triangular Jordan cell of size $N \times N$ with
eigenvalue $z_{0}$ in the unit disk), then the interpolation
condition \eqref{RRint} transcribes to
$$ f^{(j)}(z_{0}) = w_{j} \text{ for } j =0,1, \dots, N-1.
$$
In the final section of the paper we return to these two concrete
examples in a combined form to illustrate the general ideas developed
here.

If $s$ is inner or $s\equiv 0$ so that
$\cH(K_s)$ is contained in $H^2$ isometrically or just equals $H^2$,
the solution can be obtained upon combining the orthogonal
projection theorem and the Beurling-Lax theorem on shift-invariant
subspaces of $H^2$. The general case requires somewhat more delicate
arguments. In Theorem \ref{T:1.2} (the main result of this paper) we
present a parametrization of all
solutions to a general interpolation problem with operator argument
based on isometric multipliers between two de Branges-Rovnyak spaces.
The formulas become even more concrete and explicit in the case of
finite-dimensional data and invertibility of the associated Pick
matrix. In the last section, we illustrate the general formalism by
showing how higher-order multipoint interpolation of classical type fits
into the general scheme.

\section{Statement of main result}

To formulate the interpolation problem of interest to us
we first fix notation and recall some needed definitions. The symbol
$\cL(\cX,\cY)$ stands for the algebra of bounded linear
operators mapping a Hilbert space $\cX$ into another Hilbert space $\cY$,
abbreviated to $\cL(\cX)$ in case $\cX=\cY$. If $T\in\cL(\cX)$ and
$E\in\cL(\cX,\C)$, the pair $(E,T)$ is called {\em  output-stable}
if the associated observability operator
\begin{equation}
\cO_{E,T}: \; x\mapsto E(I-zT)^{-1}x=\sum_{n=0}^\infty z^n ET^nx
\label{1.4}
\end{equation}
maps $\cX$ into $H^2$ and is bounded. The pair is called {\em observable}
if the operator $\cO_{E,T}: \; \cX\to H^2$ is injective. For an
output-stable pair
$(E,T)$, we define the tangential functional calculus $f\mapsto f(T^*)E^*$
on $H^2$ by
\begin{equation}
f(T^*)E^*=\sum_{n\ge 0}f_n T^{* n}E^{*}\quad\mbox{if}\quad
f(z)=\sum_{n\ge 0}f_nz^n.
\label{1.5}
\end{equation}
The computation
$\left\langle \sum_{n\ge 0}f_n T^{* n}E^{*}, \; x\right\rangle_{\cX}=
\sum_{n\ge 0} f_{n}\overline{E T^{n}x}=\langle f, \; \mathcal {O}_{E, T}
x\rangle_{H^2}$
shows that the output-stability of $(E, T)$ is exactly
what is needed to verify that the infinite series in the definition
\eqref{1.5} of $f(T^*)E^*$ converges in the weak topology on $\cX$.
The same computation shows that tangential evaluation
with operator argument amounts to the adjoint of $\cO_{E, T}$:
\begin{equation}
f(T^{*})E^*= \cO_{E, T}^{*}f\quad\mbox{for}\quad f \in H^2.
\label{1.6}
\end{equation}
Evaluation \eqref{1.4} certainly applies to functions from de
Branges-Rovnyak spaces $\cH(K_s)\subset H^2$
and suggests the following interpolation problem.
\begin{problem}
Given $s\in\cS$ and given
$T\in\cL(\cX)$ and $E, {\bf x}\in\cL(\cX,\C)$ so that the pair
$(E,T)$ is output stable and observable, find all functions
$f\in\cH(K_s)$ such that
\begin{equation}
f(T^*)E^*={\bf x}^*.
\label{1.7}
\end{equation}
\label{P:2.1}
\end{problem}
The solvability criterion for Problem \ref{P:2.1} is given in terms of a
positive semidefinite operator $P\in\cL(\cX)$ which we now introduce.
We first apply evaluation \eqref{1.5} to the given
$s\in\cS$ to define  $N\in\cL(\cX,\C)$ by
\begin{equation}
N^*:=s(T^{*})E^*= \cO_{E, T}^{*}s\in\cX.
\label{1.8}
\end{equation}
Then the pair $(N,T)$ is output stable (cf. \cite[Proposition 3.1]{bbieot})
and the observability operator $\cO_{N,T}: \; x\mapsto
N(I-zT)^{-1}x$ equals
\begin{equation}
\cO_{N, T}=M_s^*\cO_{E, T}: \; \cX\to H^2.
\label{1.9}
\end{equation}
Since $M_s: \, H^2\to H^2$ is a contraction, it follows from
\eqref{1.9} that the operator
\begin{equation}
P:=\cO_{E, T}^{*}\cO_{E, T}-\cO_{N, T}^{*}\cO_{N, T}=
\cO_{E, T}^{*}(I-M_s^*M_s)\cO_{N, T}
\label{1.10}
\end{equation}
is positive semidefinite.
Another important property of $P$ (see e.g., \cite{bbieot}) is that it
satisfies the Stein equation
\begin{equation}
P-T^*PT=E^*E-N^*N.
\label{1.11}
\end{equation}
As we will see in Remark \ref{R:2.2} below, the condition
${\bf x}^*\in{\rm Ran}\, P^{\frac{1}{2}}$ is necessary and sufficient for
Problem \ref{P:2.1} to have a solution.
In particular, Problem \ref{P:2.1} always has a solution if $P$ is
strictly positive definite, that is if $P^{-1}\in\cL(\cX)$.
Parametrization of all solutions
of the problem in this nondegenerate case is given in Theorem \ref{T:1.2}
below which is the main result of the paper. To formulate it we need some
auxiliary constructions.

\smallskip

We first observe that if $P$ is boundedly
invertible, then so is the observability gramian $\cO_{E, T}^{*}\cO_{E,
T}$ (see formula \eqref{1.10}) which in turn implies (see e.g.,
\cite{BR}) that the operator $T$ is {\em strongly stable} in the sense
that  $T^n$ converge to zero in the strong operator topology. We also
recall that if $T$ is strongly stable, then the Stein equation
\eqref{1.11} has a unique solution (given of course by formula
\eqref{1.10}). Let $J$ be
the signature matrix given by
\begin{equation}
J=\left[\begin{array}{cr}1 &0\\ 0& -1\end{array}\right]\quad\mbox{and
let}\quad \Theta(z)=\begin{bmatrix}a(z) & b(z) \\ c(z) & d(z)\end{bmatrix}
\label{1.12}
\end{equation}
be a $2\times 2$ matrix function such that for all $z,\zeta\in\D$,
\begin{equation}
\frac{J - \Theta(z)J\Theta(\zeta)^{*}}{1 -z\overline{\zeta}}
=\begin{bmatrix}E \\ N\end{bmatrix}(I - zT)^{-1} P^{-1}(I -
\overline{\zeta}T^{*})^{-1}\begin{bmatrix}E^* & N^*\end{bmatrix}.
\label{1.13}
\end{equation}
The function $\Theta$ is determined by equality \eqref{1.13} uniquely up
to
a constant $J$-unitary factor on the right. One possible choice of $\Theta$
satisfying \eqref{1.13} is
$$
\Theta(z)=D+z\begin{bmatrix}E \\ N\end{bmatrix}(I-zT)^{-1}B
$$
where the operator $\sbm{ B \\ D}\colon \C^2\to
\sbm{\cX \\ \C^2}$ is an injective solution
to the $J$-Cholesky factorization problem
$$
\begin{bmatrix} B \\ D \end{bmatrix}J
\begin{bmatrix} B^{*}&  D^{*} \end{bmatrix} =
\begin{bmatrix} P^{-1} & 0 \\ 0 & J
\end{bmatrix} - \begin{bmatrix} T \\ E \\ N
\end{bmatrix} P^{-1} \begin{bmatrix} T^{*} & E^* & N^*\end{bmatrix}
$$
(such a solution exists due to \eqref{1.11}). If ${\rm
spec}(T)\cap{\mathbb
T}\neq{\mathbb T}$ (which is the case if e.g., $\dim \cX<\infty$), then
a function $\Theta$ satisfying \eqref{1.13} can be taken in the form
\begin{equation}
\Theta(z)=I_2+(z-\mu)\left[\begin{array}{c}E^* \\
N^*\end{array}\right]
(I_\cX-zT^*)^{-1}P^{-1}(\mu I_\cX-T)^{-1}\left[\begin{array}{cc}
E & -N\end{array}\right]
\label{1.14}
\end{equation}
where $\mu$ is an arbitrary point in ${\mathbb T}\setminus {\rm spec}(T)$.
For $\Theta$ of the form \eqref{1.14}, the verification of identity
\eqref{1.13} is straightforward and relies on the Stein identity
\eqref{1.11} only.
It follows from \eqref{1.13} that $\Theta$ is $J$-contractive on $\D$,
i.e., that
$\Theta(z)J\Theta(z)^{*}\le J$ for all $z\in\D$. A much less trivial fact is that
due to strong stability of $T$, the function $\Theta$ is $J$-inner, that is,
the nontangential boundary values $\Theta(t)$ exist for almost all $t\in\T$ and are
$J$-unitary: $\Theta(t)J\Theta(t)^{*}= J$. In particular, $|\det \Theta(t)|=1$.
Every $J$-contractive function $\Theta=\sbm{a & b \\ c & d}$ with $\det\Theta\not\equiv 0$
gives rise to the one-to-one linear fractional transform
\begin{equation}
\cE\mapsto {\bf T}_{\Theta}[\cE]:=\frac{a\cE+b}{c\cE+d}
\label{1.15}
\end{equation}
mapping the Schur class $\cS$ into itself. In case $\Theta$ is a $J$-inner function
satisfying identity \eqref{1.13}, the transform \eqref{1.15}
establishes a one-to-one correspondence between $\cS$ and the set of all Schur class
functions $g$ such that $g(T^{*})E^*=N^*$.  Here we define $g(T^{*})E^*$ according to
definition \eqref{1.5}, using the fact that as sets we have the inclusion $\cS\subset H^2$.
Since the given function $s$ satisfies the latter
condition by definition \eqref{1.8} of $N$, it follows that $s={\bf
T}_{\Theta}[\sigma]$
for some (uniquely determined) function $\sigma\in\cS$ which is recovered from $s$ by
\begin{equation}
\sigma=\frac{ds-b}{a-cs}.
\label{1.16}
\end{equation}
The fact that $\sigma$ of the form \eqref{1.16} belongs to the Schur
class can be interpreted as a general version of the Schwarz-Pick lemma.
The last needed ingredient is  the $\cL(\cX,\C)$-valued function
\begin{equation}
F^s(z)=(E-s(z)N)(I-zT)^{-1}
\label{1.17}
\end{equation}
which also is completely determined from interpolation data. For the
multiplication operator $M_{F^s}$ we have
\begin{equation}
M_{F^s}=\cO_{E,T}-M_s\cO_{N,T}=(I-M_s^*M_s)\cO_{E,T}
\label{1.18}
\end{equation}
where the first equality follows from \eqref{1.17} and definitions of
observability
operators and where the second equality is a consequence of \eqref{1.9}.
Formula \eqref{1.18} and the range characterization of $\cH(K_s)$ imply
that
$M_{F^s}$ maps $\cX$ into $\cH(K_s)$. Furthermore, it follows from
\eqref{1.17}, \eqref{1.2} and \eqref{1.4} that
$$
\|F^sx\|^2_{\cH(K_s)}=
\langle (I - M_s M_s^{*}) \cO_{E,T}x, \cO_{E,T}x\rangle_{H^2}
=\langle (\cO_{E,T}^{*} \cO_{E,T} -
          \cO_{N,T}^{*}\cO_{N,T})x, x \rangle_{\cX}
$$
for every $x\in\cX$ which implies, on account of \eqref{1.10}, that
\begin{equation}
\|F^sx\|^2_{\cH(K_s)}=\langle Px, \, x\rangle_{\cX}\quad\mbox{for all}\quad x\in\cX.
\label{1.19}
\end{equation}
\begin{theorem}
Let us assume that the data set of Problem \ref{P:2.1} is such that
the operator $P$ defined in \eqref{1.10} is strictly positive definite.
Let
$\Theta=\sbm{a & b \\ c & d}$ be a $J$-inner function satisfying
\eqref{1.13} and let $\sigma\in\cS$ and $F^s$ be given as in \eqref{1.16}
and \eqref{1.17}.
Then
\begin{enumerate}
\item All solutions $f$ of
Problem \ref{P:2.1} are parametrized by the formula
\begin{equation}
f=F^sP^{-1}{\bf x}^*+(a-sc)h,
\label{1.20}
\end{equation}
where $h$ is a free parameter from the space $\cH(K_\sigma)$.
\item Representation \eqref{1.20} is orthogonal in the metric of
$\cH(K_s)$.
\item The multiplication operator $M_{a-sc}: \, \cH(K_\sigma)\to \cH(K_s)$ is isometric.
\item For every $f$ of the form \eqref{1.20},
$\, \|f\|^2_{\cH(K_s)}={\bf x}P^{-1}{\bf x}^*+\|h\|^2_{\cH(K_\sigma)}$.
\end{enumerate}
\label{T:1.2}
\end{theorem}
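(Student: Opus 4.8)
We sketch the proof we would give.

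\medskip

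The whole argument can be pivoted on a single reproducing-kernel identity, so the plan is to establish that first. Put $\varphi := a - sc$. Since $s = {\bf T}_{\Theta}[\sigma]$, i.e. $s = (a\sigma+b)/(c\sigma+d)$, short computations give $a - sc = \det\Theta/(c\sigma+d)$ and $b - sd = -\sigma\det\Theta/(c\sigma+d)$, whence
\[
\begin{bmatrix}1 & -s(z)\end{bmatrix}\Theta(z) = \varphi(z)\begin{bmatrix}1 & -\sigma(z)\end{bmatrix}\qquad(z\in\D).
\]
Writing $G(z) := \sbm{E\\ N}(I-zT)^{-1}$, so that $F^s = \sbm{1 & -s}G$, I would multiply the defining identity \eqref{1.13} for $\Theta$ on the left by $\sbm{1 & -s(z)}$ and on the right by $\sbm{1 \\ -\overline{s(\zeta)}}$; using $\sbm{1 & -s(z)}J\sbm{1 \\ -\overline{s(\zeta)}} = 1 - s(z)\overline{s(\zeta)}$ together with the displayed identity to reduce $\sbm{1 & -s(z)}\Theta(z)J\Theta(\zeta)^*\sbm{1 \\ -\overline{s(\zeta)}}$ to $\varphi(z)(1-\sigma(z)\overline{\sigma(\zeta)})\overline{\varphi(\zeta)}$, and then dividing by $1-z\bar\zeta$, one arrives at
\begin{equation*}
K_s(z,\zeta) = \varphi(z)\,K_\sigma(z,\zeta)\,\overline{\varphi(\zeta)} + F^s(z)\,P^{-1}F^s(\zeta)^*. \tag{$\ast$}
\end{equation*}
This is the step I expect to be the real obstacle; everything afterwards is bookkeeping with $(\ast)$.

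Both kernels on the right of $(\ast)$ are positive. By the standard multiplier/reproducing-kernel lemma the first term makes $M_{a-sc}\colon\cH(K_\sigma)\to\cH(K_s)$ a contraction, while the second term, combined with \eqref{1.19}, makes $\Psi := M_{F^s}P^{-1/2}\colon\cX\to\cH(K_s)$ an \emph{isometry}, since $\|\Psi x\|^2_{\cH(K_s)} = \|F^s(P^{-1/2}x)\|^2_{\cH(K_s)} = \langle P(P^{-1/2}x),P^{-1/2}x\rangle = \|x\|^2$. Next I would evaluate $M_{a-sc}M_{a-sc}^{*} + \Psi\Psi^{*}$ on the reproducing kernels $k_\zeta^s = K_s(\cdot,\zeta)$ of $\cH(K_s)$ — using $M_{a-sc}^{*}k_\zeta^s = \overline{\varphi(\zeta)}\,k_\zeta^\sigma$ and $\Psi^{*}k_\zeta^s = P^{-1/2}F^s(\zeta)^*$ — which by $(\ast)$ returns $K_s(z,\zeta)$, so $M_{a-sc}M_{a-sc}^{*}+\Psi\Psi^{*}=I$ on $\cH(K_s)$. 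Since $\Psi$ is isometric, $\Psi\Psi^{*}$ is the orthogonal projection onto ${\rm Ran}\,M_{F^s}$, hence $M_{a-sc}M_{a-sc}^{*} = I-\Psi\Psi^{*}$ is the orthogonal projection onto $({\rm Ran}\,M_{F^s})^{\perp}$. Because $\varphi\cdot(c\sigma+d)=\det\Theta\not\equiv0$, the multiplier $M_{a-sc}$ has trivial kernel, so it is in fact an isometry — this is statement (3) — and $\cH(K_s) = {\rm Ran}\,M_{F^s}\oplus(a-sc)\cH(K_\sigma)$ as an orthogonal direct sum.

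Statements (2) and (4) are then quick. In \eqref{1.20} the first term equals $\Psi(P^{-1/2}{\bf x}^{*})\in{\rm Ran}\,M_{F^s}$ and the second equals $M_{a-sc}h$, which lies in the orthogonal complement ${\rm Ran}\,M_{a-sc}$; this gives (2). Statement (4) follows from the Pythagorean theorem together with the isometry of $\Psi$ and of $M_{a-sc}$ and the computation $\|P^{-1/2}{\bf x}^{*}\|^2 = \langle P^{-1}{\bf x}^{*},{\bf x}^{*}\rangle = {\bf x}P^{-1}{\bf x}^{*}$.

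Finally, for statement (1) I would record that $\cO_{E,T}^{*}M_{F^s}=P$ — which follows from $M_{F^s}=\cO_{E,T}-M_s\cO_{N,T}$, from \eqref{1.9}, and from \eqref{1.10} — and that $\cO_{E,T}^{*}M_{a-sc}=0$. The latter I would check on the total set $\{k_w^\sigma:\varphi(w)\neq0\}$ of $\cH(K_\sigma)$: using $M_{F^s}=(I-M_sM_s^{*})\cO_{E,T}$, the factorisation $k_w^s=(I-M_sM_s^{*})k_w^{H^2}$ (so that $\cO_{E,T}^{*}k_w^s=M_{F^s}^{*}k_w^{H^2}=F^s(w)^{*}$), and $(\ast)$, one gets $\overline{\varphi(w)}\,\cO_{E,T}^{*}\big((a-sc)k_w^\sigma\big)=\cO_{E,T}^{*}k_w^s-\cO_{E,T}^{*}M_{F^s}\big(P^{-1}F^s(w)^{*}\big)=F^s(w)^{*}-F^s(w)^{*}=0$. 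Granting these, every $f$ of the form \eqref{1.20} satisfies $f(T^{*})E^{*}=\cO_{E,T}^{*}f=\cO_{E,T}^{*}M_{F^s}P^{-1}{\bf x}^{*}={\bf x}^{*}$, so it solves Problem \ref{P:2.1}; conversely, decomposing an arbitrary solution $f$ as $f=\Psi u+M_{a-sc}h$ via the orthogonal splitting and applying $\cO_{E,T}^{*}$ forces $P^{1/2}u={\bf x}^{*}$, i.e. $u=P^{-1/2}{\bf x}^{*}$ and $\Psi u=F^sP^{-1}{\bf x}^{*}$, which is exactly \eqref{1.20}. Beyond establishing $(\ast)$, the only delicate point is the consequence of $\det\Theta\not\equiv0$ used twice above: the injectivity of $M_{a-sc}$ and the totality of $\{k_w^\sigma:\varphi(w)\neq0\}$ in $\cH(K_\sigma)$.
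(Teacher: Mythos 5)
Your proposal is correct, and it takes a genuinely different route from the paper's, at least for the parametrization in part (1). The shared ingredient is your identity $(\ast)$, which is precisely the paper's \eqref{2.9}--\eqref{2.11}, obtained in the same way (multiply \eqref{1.13} on the left by $\begin{bmatrix}1 & -s(z)\end{bmatrix}$ and use $\begin{bmatrix}1 & -s\end{bmatrix}\Theta=(a-cs)\begin{bmatrix}1 & -\sigma\end{bmatrix}$). From there the paper argues through Theorem \ref{T:2.1}, the Potapov-type positive-kernel characterization of solutions with $\|f\|^2_{\cH(K_s)}\le\gamma$: it takes the Schur complement of the invertible block $P$ in \eqref{2.3} and identifies $h=(a-cs)^{-1}(f-F^sP^{-1}{\bf x}^*)$ as an element of $\cH(K_\sigma)$ by citing \cite{beabur}; statements (2), (3), (4) are then proved by separate computations, with (3) verified directly on the dense family $\sum_i\alpha_iK_\sigma(\cdot,z_i)\overline{u(z_i)}$. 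You instead convert $(\ast)$ into the operator identity $M_{a-sc}M_{a-sc}^{[*]}+\Psi\Psi^{[*]}=I$ with $\Psi=M_{F^s}P^{-1/2}$ isometric by \eqref{1.19}, deduce that $M_{a-sc}$ is a partial isometry with trivial kernel (since $(a-sc)(c\sigma+d)=\det\Theta\not\equiv0$), hence isometric, and obtain the orthogonal decomposition $\cH(K_s)=\operatorname{Ran}M_{F^s}\oplus(a-sc)\cH(K_\sigma)$; all four items then follow from this decomposition together with $\cO_{E,T}^*M_{F^s}=P$ and $\cO_{E,T}^*M_{a-sc}=0$, the latter correctly checked on the total family $\{K_\sigma(\cdot,w):\,(a-sc)(w)\neq0\}$, and the two delicate points you flag (injectivity of $M_{a-sc}$ and totality of that family) are indeed settled by analyticity plus $\det\Theta\not\equiv0$. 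As for what each approach buys: the paper's kernel/Schur-complement route avoids any operator identity and delivers the norm-constrained version $\|f\|^2_{\cH(K_s)}\le\gamma$ in one stroke from Theorem \ref{T:2.1}, at the price of delegating a key step to the Beatrous--Burbea theorem; your route is more self-contained and structural, exhibiting the solution set as a coset of an orthogonal summand of $\cH(K_s)$, so that (2)--(4) become immediate consequences rather than separate verifications, and the norm-constrained version is still recovered from your item (4).
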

It follows from Theorem \ref{T:1.2} that the norm-constrained version of
Problem \ref{P:2.1} can be solved with no additional efforts:
given $\gamma\ge 0$, all solutions $f$ to the problem  ${\bf
P}(\cH(K_s))$ satisfying additional condition $\|f\|^2_{\cH(K_s)}\le
\gamma$ are parametrized by formula
\eqref{1.20} where the parameter $h$ varies in $\cH(K_\sigma)$  and is
such that $\|h\|^2_{\cH(K_\sigma)}\le \gamma-{\bf x}P^{-1}{\bf x}^*$.

\section{The proof}

In this section we present the proof of Theorem \ref{T:1.2}.
We will make use of multiplication operators $M_{F^s}: \, \cX\to \cH(K_s)$
and $M_f: \, \C\to\cH(K_s)$ for the function $F^s$ defined in \eqref{1.17}
and
for the interpolant $f\in\cH(K_s)$. Since $\cH(K_s)\subset H^2$, the adjoints
of $M_{F^s}$ and $M_f$ can be taken in the metric of $H^2$ as well as in the metric
of $\cH(K_s)$ which are not the same unless $s$ is inner. To avoid confusion
we will use notation $A^{[*]} $ for the adjoint of $A$ in the metric of $\cH(K_s)$.
In terms of these adjoints, we have
\begin{equation}
\|f\|^2_{\cH(K_S)}=M_f^{[*]}M_f\quad\mbox{and}\quad P=M_{F^s}^{[*]}M_{F^s}
\label{2.1}
\end{equation}
where the first equality is self-evident and the second rephrases
\eqref{1.19}.
Furthermore, upon making subsequent use of \eqref{1.18}, \eqref{1.2} and
\eqref{1.6},
we see that for  every $f\in\cH(K_s)$ and $x\in\cX$,
\begin{align*}
\langle x, M_{F^s}^{[*]}f\rangle_\cX&=\langle F^sx, \, f\rangle_{\cH(K_s)}=
\langle (I-M_sM_s^*)\cO_{E,T}x, \, f\rangle_{\cH(K_s)}\\
&=\langle \cO_{E,T}x, \, f\rangle_{H^2}=\langle x,
\, \cO_{E,T}^*f\rangle_{\cX}=\langle x, f(T^{*})E^*\rangle_{\cX}
\end{align*}
which shows that interpolation condition \eqref{1.7} can be written
as
\begin{equation}
M_{F^s}^{[*]}f={\bf x}^*.
\label{2.2}
\end{equation}
\begin{remark}
Problem \ref{P:2.1} has a solution if and only if ${\bf
x}^*\in{\rm Ran}\, P^{\frac{1}{2}}$.
\label{R:2.2}
\end{remark}
\begin{proof}[\bf Proof.] By the second equality in \eqref{2.1}, ${\rm Ran}\,
P^{\frac{1}{2}}={\rm Ran} \, M_{F^s}^{[*]}$. Thus, ${\bf x}^*$ belongs to
${\rm Ran}\, P^{\frac{1}{2}}$ if and only if it belongs to ${\rm Ran} \,
M_{F^s}^{[*]}$, that is, if and only if equality \eqref{2.2} holds for
some $f\in\cH(K_s)$ which means that this $f$ solves Problem
\ref{P:2.1}.\end{proof}

The next theorem characterizes solutions to Problem \ref{P:2.1}
in terms of positive kernels. Characterizations of this type
were first applied to interpolation problems by V. Potapov \cite{kopot}.
\begin{theorem}
Given $\gamma>0$, a function $f: \, \D\to\C$ is a solution of
Problem \ref{P:2.1}
and satisfies the norm constraint $\|f\|_{\cH(K_s)}^2\le \gamma$ if and only if
the following kernel is positive on $\D\times\D$:
\begin{equation}
{\bf K}(z,\zeta)=\begin{bmatrix}\gamma & {\bf x} & f(\zeta)^*\\
{\bf x}^* & P & F^s(\zeta)^{*} \\ f(z) & F^s(z) & K_s(z,\zeta)
\end{bmatrix}\succeq 0\qquad (z,\zeta\in\D),
\label{2.3}
\end{equation}
where $P$ and $F^s$ are defined in \eqref{1.10} and \eqref{1.4}.
\label{T:2.1}
\end{theorem}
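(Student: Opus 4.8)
The plan is to prove the equivalence in Theorem~\ref{T:2.1} by relating positivity of the $3\times 3$ block kernel ${\bf K}(z,\zeta)$ to two separate facts: that $f$ solves the interpolation condition \eqref{2.2}, and that $\|f\|^2_{\cH(K_s)}\le\gamma$. The natural framework is the theory of reproducing kernels and Schur-complement arguments for positive block kernels. First I would recall that the $(3,3)$-entry $K_s(z,\zeta)$ is by hypothesis a positive kernel, that the $(2,2)$-entry $P$ is a positive \emph{constant} (i.e., a positive semidefinite operator on $\cX$, in our nondegenerate case strictly positive), and that the $(1,1)$-entry $\gamma$ is a positive scalar. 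So the whole matter is whether the off-diagonal blocks are ``dominated'' by the diagonal ones in the appropriate kernel sense.

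The key step is to perform a Schur complement with respect to the invertible corner $P$. Positivity of ${\bf K}(z,\zeta)$ on $\D\times\D$ is equivalent (by the standard block-matrix/kernel Schur-complement lemma, using that the constant block $P$ is strictly positive) to positivity of the $2\times 2$ kernel obtained by compressing out the middle row and column:
\begin{equation}
\begin{bmatrix}\gamma-{\bf x}P^{-1}{\bf x}^* & f(\zeta)^*-{\bf x}P^{-1}F^s(\zeta)^* \\[2pt]
f(z)-F^s(z)P^{-1}{\bf x}^* & K_s(z,\zeta)-F^s(z)P^{-1}F^s(\zeta)^*\end{bmatrix}\succeq 0.
\nonumber
\end{equation}
Here one must first check that positivity of the full kernel \emph{forces} $f$ to have the form $f=F^sP^{-1}{\bf x}^*+g$ with the remainder $g$ controlled; more precisely, the standard argument is that a positive kernel of the above $2\times 2$ shape with a \emph{constant} scalar in the $(1,1)$-slot and a kernel in the $(2,2)$-slot forces the $(1,2)$/$(2,1)$ entries to be constant multiples of a fixed element — so $f(z)-F^s(z)P^{-1}{\bf x}^*$ must itself be expressible through the reproducing kernel of the remaining space. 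I would identify the $(2,2)$ residual kernel $K_s(z,\zeta)-F^s(z)P^{-1}F^s(\zeta)^*$ and, using \eqref{1.16}, \eqref{1.17}, \eqref{1.13} and the $J$-inner property of $\Theta$, show it equals $(a(z)-s(z)c(z))K_\sigma(z,\zeta)(a(\zeta)-s(\zeta)c(\zeta))^*$, i.e.\ the pullback of $K_\sigma$ under the isometric multiplier $M_{a-sc}$ from part~(3) of Theorem~\ref{T:1.2}. This is the identity that makes $\cH(K_\sigma)$ appear.

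With that identification in hand, the $2\times 2$ positivity becomes: $\gamma-{\bf x}P^{-1}{\bf x}^*\ge\|h\|^2_{\cH(K_\sigma)}$ where $h\in\cH(K_\sigma)$ is the unique element with $(a-sc)h=f-F^sP^{-1}{\bf x}^*$ — which is precisely the norm bound from part~(4) of Theorem~\ref{T:1.2} together with the assertion that $f$ has the parametrized form \eqref{1.20}, hence solves the problem. Conversely, if $f$ solves Problem~\ref{P:2.1} with $\|f\|^2_{\cH(K_s)}\le\gamma$, then by Theorem~\ref{T:1.2} (parts (1), (2), (4)) $f=F^sP^{-1}{\bf x}^*+(a-sc)h$ with $\|h\|^2_{\cH(K_\sigma)}\le\gamma-{\bf x}P^{-1}{\bf x}^*$, and running the Schur-complement computation backwards yields ${\bf K}\succeq 0$. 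The main obstacle I anticipate is the ``forcing'' direction: extracting from mere positivity of ${\bf K}(z,\zeta)$ that $f$ must lie in the affine set $F^sP^{-1}{\bf x}^*+M_{a-sc}\cH(K_\sigma)$ — this requires the reproducing-kernel argument that a positive kernel $\sbm{\text{const} & v(\zeta)^* \\ v(z) & L(z,\zeta)-w(z)w(\zeta)^*}$ built over a space $\cH(L)$ forces $v$ into $\cH(L)$ with norm controlled by the constant, and one has to be careful that the relevant residual kernel is genuinely that of a de~Branges--Rovnyak space so the machinery applies. The remaining algebraic identities (verifying the residual $(2,2)$ kernel via \eqref{1.13}) are routine given the $J$-inner structure already established.
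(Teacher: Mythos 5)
There is a genuine gap here, and it is structural rather than computational. First, Theorem \ref{T:2.1} is stated for a general data set: $P$ is only positive semidefinite, and no invertibility is assumed (indeed the theorem is the tool that lets one treat solvability, cf.\ Remark \ref{R:2.2}, before any nondegeneracy is imposed). Your very first step, the Schur complement with respect to the middle block $P$, already requires $P$ to be strictly positive, so at best you would be proving a weaker statement. Second, and more seriously, both directions of your argument invoke Theorem \ref{T:1.2} (parts (1)--(4)): you use the parametrization \eqref{1.20} to conclude ``hence $f$ solves the problem'' in one direction and to produce the positive kernel in the other. But in the paper's logical order Theorem \ref{T:2.1} is the lemma from which Theorem \ref{T:1.2} is deduced; in particular the facts that $F^sP^{-1}{\bf x}^*$ satisfies the interpolation condition and that $(a-cs)h$ solves the homogeneous problem (equation \eqref{2.13}) are themselves obtained there via Theorem \ref{T:2.1}. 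Your proposal supplies no independent verification of either fact, so the argument is circular. To rescue your route you would have to (i) add the hypothesis $P>0$ and (ii) prove directly, without Theorem \ref{T:1.2}, that every $f$ of the form \eqref{1.20} satisfies $M_{F^s}^{[*]}f={\bf x}^*$ (e.g.\ by computing $\cO_{E,T}^*\,(a-cs)h=0$ from the $J$-inner structure of $\Theta$), which amounts to re-proving the main theorem.

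The paper's own proof avoids all of this machinery. It rewrites the interpolation condition as $M_{F^s}^{[*]}f={\bf x}^*$ (equation \eqref{2.2}) and uses $P=M_{F^s}^{[*]}M_{F^s}$, $\|f\|^2_{\cH(K_s)}=M_f^{[*]}M_f$ from \eqref{2.1}; it then forms the operator ${\bf P}$ on $\C\oplus\cX\oplus\cH(K_s)$ in \eqref{2.5} and observes, by testing on the dense family of vectors \eqref{2.7}, the identity \eqref{2.6} equating the quadratic form of ${\bf P}$ with the quadratic form of the kernel ${\bf K}$; thus ${\bf K}\succeq 0$ on $\D\times\D$ if and only if ${\bf P}\ge 0$. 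The forward direction is then immediate from the factored positivity \eqref{2.4} together with $\|f\|^2\le\gamma$, and the converse takes the Schur complement with respect to the $(3,3)$-block $I_{\cH(K_s)}$ (always invertible): its $(2,2)$-entry $P-M_{F^s}^{[*]}M_{F^s}$ vanishes, which forces the off-diagonal entry ${\bf x}^*-M_{F^s}^{[*]}f$ to vanish and leaves $\gamma-M_f^{[*]}M_f\ge 0$. No invertibility of $P$, no $\Theta$, $\sigma$, or Beatrous--Burbea theorem is needed. The residual-kernel identity \eqref{2.11} and the Beatrous--Burbea argument you describe are exactly the paper's ingredients, but they belong to the proof of Theorem \ref{T:1.2}(1), where $P>0$ is assumed and where Theorem \ref{T:2.1} has already been established.
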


\begin{proof}[\bf Proof.]  Let us assume that $f\in\cH(K_s)$ satisfies
$\|f\|_{\cH(K_s)}^2\le
\gamma$ and meets interpolation condition \eqref{1.7} (or equivalently,
condition
\eqref{2.2}). The operator ${\mathbb P}\in\cL(\C\oplus\cX\oplus\cH(K_s))$ defined
below is positive semidefinite:
\begin{equation}
{\mathbb P}=\begin{bmatrix}M_f^{[*]} \\ M_{F^s}^{[*]} \\
I_{\cH(K_s)}\end{bmatrix}\begin{bmatrix}M_f & M_{F^s} & I_{\cH(K_s)}
\end{bmatrix}\ge 0.
\label{2.4}
\end{equation}
Taking this into account, along with equalities \eqref{2.1} and \eqref{2.2}, we then have
\begin{equation}
0\le {\mathbb P}=\begin{bmatrix}\|f\|_{\cH(K_s)}^2 & {\bf x} & M_f^{[*]} \\
{\bf x}^* & P &  M_{F^s}^{[*]} \\ M_f &  M_{F^s} & I_{\cH(K_s)}
\end{bmatrix}\le \begin{bmatrix}\gamma & {\bf x} & M_f^{[*]} \\
{\bf x}^* & P &  M_{F^s}^{[*]} \\ M_f &  M_{F^s} & I_{\cH(K_s)}
\end{bmatrix}=:{\bf P}.
\label{2.5}
\end{equation}
We next observe for every vector $g\in\C\oplus{\mathcal X}\oplus \cH(K_S)$ of
the form
\begin{equation}
g(z)=\sum_{j=1}^r\left[\begin{array}{c}c_j \\ x_j\\
K_s(\cdot,z_j)y_j\end{array}\right]\qquad
(c_j, y_j\in\C, \; x_j\in{\mathcal X}, \; z_j\in\D).
\label{2.7}
\end{equation}
the identity
\begin{equation}
\left\langle {\bf P}g, \; g\right\rangle_{\C\oplus\cX\oplus
\cH(K_s)}=\sum_{j,\ell=1}^r \left\langle{\bf K}(z_j,z_\ell)
\sbm{c_\ell \\ x_\ell \\ y_\ell}, \; \sbm{c_j \\ x_j \\ y_j}
\right\rangle_{\C\oplus{\mathcal X}\oplus \C}
\label{2.6}
\end{equation}
holds. Since ${\bf P}\ge 0$, the quadratic form on the  right hand side
of \eqref{2.6} is nonnegative, which proves \eqref{2.3}.

\smallskip

Conversely, let us assume
that the kernel \eqref{2.3} is positive on $\D\times\D$. Since the set of
vectors of the form \eqref{2.7} is dense in $\C\oplus{\mathcal X}\oplus
\cH(K_S)$, the identity \eqref{2.6} now implies that the operator
${\bf P}$ is positive semidefinite and therefore, the
Schur complement of the $(3,3)$-block in ${\bf P}$ is positive
semidefinite. On account of \eqref{2.5}, we have
$$
\begin{bmatrix}\gamma-M_f^{[*]}M_f & {\bf x}-M_f^{[*]}M_{F^s} \\
{\bf x}^*-M_{F^s}^{[*]}M_f & P-M_{F^s}^{[*]} M_{F^s}\end{bmatrix}=
\begin{bmatrix}\gamma-M_f^{[*]}M_f & {\bf x}-M_f^{[*]}M_{F^s} \\
{\bf x}^*-M_{F^s}^{[*]}M_f & 0\end{bmatrix}\ge 0
$$
which implies \eqref{2.2} (and therefore \eqref{1.7}) and
$\|f\|^2_{\cH(K_s)}=M_f^{[*]}M_f\le \gamma$  which completes
the proof.\end{proof}

\begin{proof}[\bf Proof of Theorem \ref{T:1.2}.]
We now assume that $P$ is
strictly positive definite.

\smallskip

{\em Proof of (1):} We first assume that $f$ is a solution of Problem
\ref{P:2.1}. Then the kernel \eqref{2.3} (with
$\gamma=\|f\|_{\cH(K_s)}^2$)
is positive by Theorem \ref{T:2.1}. Since  $P$ is boundedly invertible,
we can take its Schur complement in ${\bf K}$ to get the equivalent inequality
\begin{equation}
\begin{bmatrix} \gamma-{\bf x} P^{-1}{\bf x}^* &
f(\zeta)^*-{\bf x}P^{-1}F^s(\zeta)^*\\
f(z)-F^s(z)P^{-1}{\bf x} & K_s(z,\zeta)-F^s(z)P^{-1}F^s(\zeta)^*
\end{bmatrix}\succeq 0.
\label{2.8}
\end{equation}
We first observe that in view of  \eqref{1.1}, \eqref{1.17} and
\eqref{1.13}, the $(2,2)$-block in \eqref{2.8} can be written in the form
\begin{equation}
K_s(z,\zeta)-F^s(z)P^{-1}F^s(\zeta)^*=
\begin{bmatrix}1 & -s(z)\end{bmatrix}
\frac{\Theta(z)J\Theta(\zeta)^*}{1-z\bar{\zeta}}\begin{bmatrix}1
\\ \\  -\overline{s(\zeta)}\end{bmatrix}.
\label{2.9}
\end{equation}
We next observe that for $\sigma\in\cS$ defined as in \eqref{1.16},
\begin{equation}
\begin{bmatrix}1 &
-s\end{bmatrix}\Theta=\begin{bmatrix}a-cs & b-ds\end{bmatrix}=
u\begin{bmatrix}1 & -\sigma\end{bmatrix}\quad\mbox{where}\quad
u:=a-cs.
\label{2.10}
\end{equation}
Substituting \eqref{2.10} into \eqref{2.9} gives
\begin{equation}
K_s(z,\zeta)-F^s(z)P^{-1}F^s(\zeta)^*=u(z)
\frac{1-\sigma(z)\overline{\sigma(\zeta)}}{1-z\bar{\zeta}}u(\zeta)^*
=u(z)K_\sigma(z,\zeta)\overline{u(\zeta)}
\label{2.11}
\end{equation}
which in turn allows us to write \eqref{2.8} as
\begin{equation}
\begin{bmatrix} \gamma-{\bf x} P^{-1}{\bf x}^* &
\overline{f(\zeta)}-{\bf x}P^{-1}F^s(\zeta)^*\\
f(z)-F^s(z)P^{-1}{\bf x} & u(z)K_\sigma(z,\zeta)\overline{u(\zeta)}
\end{bmatrix}\succeq 0.
\label{2.12}
\end{equation}
By \cite[Theorem 2.2]{beabur}, the latter inequality means that
the function
$$
h:=u^{-1}(f-F^sP^{-1}{\bf x})
$$
belongs to the space $\cH(K_\sigma)$ with norm $\|h\|_{\cH(K_\sigma)}\le (\gamma-{\bf x}
P^{-1}{\bf x}^*)^{\frac{1}{2}}$. The  desired representation \eqref{1.20}
for $f$ now follows from definitions of $h$ and $u$.

\smallskip

Conversely, if we start with a function $h\in\cH(K_\sigma)$ with norm
$\|h\|_{\cH(K_\sigma)}=(\gamma-{\bf x}P^{-1}{\bf x}^*)^{\frac{1}{2}}$
for some $\gamma\ge {\bf x}P^{-1}{\bf x}^*$ and define $f$ by formula
\eqref{1.20},
then again by \cite[Theorem 2.2]{beabur} we have
$$
\begin{bmatrix}\gamma-{\bf x} P^{-1}{\bf x}^* &
\overline{h(\zeta)u(\zeta)}\\ u(z)h(z) & u(z)K_\sigma(z,\zeta)\overline{u(\zeta)}
\end{bmatrix}\succeq 0
$$
which is the same as \eqref{2.12} and also the same as \eqref{2.8} (due to
\eqref{2.11}).
Since $P$ is invertible, inequality \eqref{2.8} is equivalent to \eqref{2.3} which implies
(by Theorem \ref{T:2.1}) that $f$ solves Problem \ref{P:2.1}.

\medskip

{\em Proof of (2):} Upon letting $h\equiv 0$ in \eqref{1.20} we conclude
that
the function $F^sP^{-1}{\bf x}^*$ is a particular solution to  Problem
\ref{P:2.1}. Therefore, the second term on the right side of
\eqref{1.20} is a general solution to the homogeneous problem:
\begin{equation}
((a-cs)h)(T^*)E^*=\cO_{E,T}^*(a-cs)h=M_{F^s}^{[*]}(a-cs)h=0\quad\mbox{for all}\quad
h\in\cH(K_\sigma).
\label{2.13}
\end{equation}
For every $h\in\cH(K_\sigma)$ we now have
$$
\langle (a-cs)h, \, F^sP^{-1}{\bf x}^*\rangle_{\cH(K_s)}=
\langle M_{F^s}^{[*]}(a-cs)h, \, P^{-1}{\bf x}^*\rangle_{\cX}=0
$$
which completes the proof of (2).

\medskip

{\em Proof of (3):} As in part (1) we use notation $u:=a-cs$. Take a function
\begin{equation}
h(z)=\sum_{i=1}^k \alpha_i
K_\sigma(z,z_i)\overline{u(z_i)}, \quad \alpha_i\in\C; \; z_i\in\D, \; k\ge 0.
\label{2.14}
\end{equation}
This function belongs to
$\cH(K_\sigma)$ and by the reproducing kernel property,
\begin{equation}
\|h\|^2_{\cH(K_\sigma)}=\sum_{i,j=1}^k
\alpha_i\overline{\alpha}_ju(z_i)\overline{u(z_j)}K_\sigma(z_i,z_j).
\label{2.15}
\end{equation}
We will show that $\|uh\|_{\cH(K_s)}=\|h\|_{\cH(K_\sigma)}$. Since the set of all
functions of the form \eqref{2.4} is dense in $\cH(K_\sigma)$ (recall that $u\not\equiv 0$),
the desired statement will follow. To complete the proof use formula
\eqref{2.11} to get
\begin{equation}
u(z)h(z)=\sum_{i=1}^k \alpha_i
u(z)K_\sigma(z,z_i)\overline{u(z_i)}=\sum_{i=1}^k\alpha_i
(K_s(z,z_i)-F^s(z)P^{-1}F^s(z_i)^*).
\label{2.16}
\end{equation}
By the reproducing property and by \eqref{1.19},
\begin{align*}
\langle K_s(\cdot,z_j),  K_s(\cdot,z_i)\rangle_{\cH(K_s)}&=K_s(z_i,z_j),\\
\langle F^sP^{-1}F^s(z_j)^*,  K_s(\cdot,z_i)\rangle_{\cH(K_s)}&=F^s(z_i)P^{-1}F^s(z_j)^*,\\
\langle F^sP^{-1}F^s(z_j)^*,
F^sP^{-1}F^s(z_i)^*\rangle_{\cH(K_s)}&=F^s(z_i)P^{-1}F^s(z_j)^*,
\end{align*}
which together with \eqref{2.16} leads us to
\begin{equation}
\|uh\|_{\cH(K_s)}^2=\sum_{i,j=1}^k \alpha_i\overline{\alpha}_j
\left(K_s(z_i,z_j)-F^s(z_i)P^{-1}F^s(z_j)^*\right).
\label{2.17}
\end{equation}
Since by \eqref{2.11},
$\; K_s(z_i,z_j)-F^s(z_i)P^{-1}F^s(z_j)^*=
u(z_i)\overline{u(z_j)}K_\sigma(z_i,z_j), \; $
it follows that  the right hand side expressions in \eqref{2.17} and \eqref{2.15} are equal.
Thus, $\|uh\|_{\cH(K_s)}=\|h\|_{\cH(K_\sigma)}$, which completes the proof of (3).

\medskip

{\em Proof of (4):} This part follows from parts (2), (3) and the equality
$$\|F^sP^{-1}{\bf x}^*\|^2_{\cH(K_s)}=
{\bf x}P^{-1}{\bf x}^*$$
which in turn, is a consequence of \eqref{1.19}.
\end{proof}

\section{An example}

Parametrization \eqref{1.20} is especially transparent in case
$\dim \cX<\infty$. Then with respect to an appropriate basis of $\cX$
the output stable observable pair $(E,T)$ has following form: $T$ is a block
diagonal matrix $T={\rm diag}\{T_1,\ldots,T_k\}$ with the diagonal block
$T_i$ equal the upper triangular $n_i\times n_i$ Jordan block with the
number $\overline{z}_i\in\D$ on the main diagonal and $E$ is the row
vector
$$
E=\begin{bmatrix}E_1 & \ldots & E_k\end{bmatrix},\quad \mbox{where}\quad
E_i=\begin{bmatrix}1 & 0 & \ldots 0\end{bmatrix}\in\C^{1\times n_i}.
$$
It is not hard to show that for $(E,T)$ as above and for every function
$f$ analytic at $z_1,\ldots,z_k$, evaluation \eqref{1.5} amounts to
\begin{equation}
f(T^*)E^*=\operatorname{Col}_{1\le i\le k}\operatorname{Col}_{0\le j<
n_i}f^{(j)}(z_i)/j!.
\label{3.1}
\end{equation}
If we  specify the entries of the column ${\bf x}^*$ by letting
$$
{\bf x}^*=\operatorname{Col}_{1\le i\le k}\operatorname{Col}_{0\le j<
n_i} x_{ij},
$$
then it is readily seen that Problem \ref{P:2.1} amounts
to the following Lagranges-Sylvester interpolation problem:

\medskip

${\bf LSP}$: {\em Given $s\in\cS$,
$k$ distinct points $z_1,\ldots,z_k\in\D$
and a collection $\left\{x_{ij}\right\}$ of complex numbers, find all
functions $f\in\cH(K_s)$ such that
$$
f^{(j)}(z_i)/j!=x_{ij}\quad\mbox{for}\quad j=0,\ldots,n_i-1; \;
i=1,\ldots,k.
$$}
The auxiliary column $N^*$ is now defined from the derivatives of the
given function $s$ via formula \eqref{3.1} and we define the matrix $P$ as
the unique solution of the Stein equation \eqref{1.11}. This matrix $P$
turns out to be equal to the Schwarz-Pick matrix
$$
P=\left[\left[\left.\frac{1}{\ell !r!} \,
\frac{\partial^{\ell+r}}{\partial
z^\ell\partial\bar{\zeta}^r} \,
\frac{1-s(z)\overline{s(\zeta)}}{1-z\bar{\zeta}}
\right\vert_{{\scriptsize\begin{smallmatrix} z=z_i\\
\zeta=z_j\end{smallmatrix}}}
\right]_{\ell=0,\ldots,n_i-1}^{r=0,\ldots,n_j-1}\right]_{i,j=1}^k
$$
which in turn is known to be positive definite  unless $s$
is a Blaschke product of degree $m<n:=n_1+\ldots+n_k$ in
which case $P$ is positive semidefinite and ${\rm rank} \, P=m$.
Thus the case $P>0$ handled in Theorem \ref{T:1.2} is generic
if $\dim \cX<\infty$. In this generic case, all the solutions $f$ to the
problem ${\bf LSP}$ are given by formula \eqref{1.20}
where now all the ingredients are not only explicit but also computable;
for example $\Theta$ is a rational $J$-inner function which can be taken
in the form \eqref{1.14}.  In particular, the unique solution of
minimal norm $f_{\text{min}}$ has the form
$$
f_{\text{min}}(z) = (E - s(z) N) (I - zT)^{-1} P^{-1} x^{*}
$$
which is quite explicit in terms of the given data, at least given
that one i willing to compute the inverse $P^{-1}$ of the
Schwarz-Pick matrix $P$.

\smallskip

If $s$ is a Blaschke product of degree $m<n$, the problem ${\bf
LSP}$ has a unique solution. Here is the sketch of the
proof. Let $m_1,\ldots,m_k$ be any nonnegative integers such that
$m_1+\ldots+m_k=m=\deg s$ and $m_i\le n_i$ for $i=1,\ldots,k$. Let us
consider the problem $\widetilde{\bf LSP}$ with interpolation
conditions
$$
f^{(j)}(z_i)/j!=x_{ij}\quad\mbox{for}\quad j=0,\ldots,m_i-1; \;
i=1,\ldots,k.
$$
Let $\widetilde{\bf x}$, $\widetilde{E}$, $\widetilde{N}$,
$\widetilde{T}$ and $\widetilde{P}$
be the matrices associated with this truncated problem. The $m\times m$
matrix $\widetilde{P}$ is a principal submatrix of $P$ and it is invertible
since $\deg s=m$. Let $\widetilde{\Theta}=\sbm{\widetilde{a} &
\widetilde{b} \\ \widetilde{c} & \widetilde{d}}$ be constructed by
formula
\eqref{1.14} (with $\widetilde{E}$, $\widetilde{N}$, $\widetilde{T}$ and
$\widetilde{P}$ instead of $E$, $N$, $T$ and $P$ respectively). It
then turns out that the function
$\widetilde\sigma=\frac{\widetilde{d}s-\widetilde{b}}
{\widetilde{a}-\widetilde{c}s}$ is the Blaschke product of degree zero,
that is a unimodular constant. By Theorem \ref{T:1.2}, all solutions of
the
truncated problem $\widetilde{\bf LSP}$ are of the form
$$
f=\widetilde{F}^s\widetilde{P}^{-1}\widetilde{\bf x}^*+(a-sc)h,
\quad\mbox{where}\quad \widetilde{F}^s=(\widetilde{E}-s(z)\widetilde{N})
(I-z\widetilde{T})^{-1}
$$
and where $h$ is a function from $\cH(K_\sigma)$. Since $\sigma$ is a
unimodular constant, the space $\cH(K_\sigma)$ is trivial and thus
the problem $\widetilde{\bf LSP}$ has only one solution
$\widetilde{F}^s\widetilde{P}^{-1}\widetilde{\bf x}^*$.
It finally can be shown that if the necessary condition ${\bf x}^*\in{\rm
Ran} \, P^{\frac{1}{2}}$ holds, then this function also solves the
larger problem ${\bf LSP}$.

\smallskip

The case where $P$ is not boundedly invertible is much more
challenging and if $\dim \cX=\infty$ and especially if the
function $s$ belongs to the {\em operator-valued} Schur class in which
case the space $\cH(K_s)$ consists of vector-valued functions
(rather than scalar-valued).
This case will be worked out on a separate occasion.

\bibliographystyle{amsplain}

\begin{thebibliography}{10}


\bibitem{bbieot}
J.~A.~Ball and V.~Bolotnikov, {\it Interpolation problems for Schur
multipliers on the
Drury-Arveson space:  from Nevanlinna-Pick to Abstract Interpolation
Problem}, Integral Equations Operator Theory {\bf 62} (2008), no. 3,
301-349.

\bibitem{bgr} J.~A.~Ball, I.~Gohberg, and L.~Rodman.
\emph{Interpolation of rational matrix functions},
OT45, Birkh\"{a}user Verlag, 1990.

\bibitem{BR} J.A.~Ball and M.W.~Raney, Discrete-time dichotomous
well-posed linear systems and generalized Schur-Nevanlinna-Pick
interpolation, {\em Complex Anal.~Oper.~Theory} \textbf{1}
(2007), 1--54.

\bibitem{beabur}
F.~Beatrous and J.~Burbea,
\emph{Positive-definiteness and its applications to interpolation
problems for holomorphic functions},
Trans. Amer. Math. Soc., \textbf{284} (1984), no.1, 247--270.


\bibitem{dbr2}
{L. de} Branges and J.~Rovnyak,
{\emph Square summable power series},
Holt, Rinehart and Winston, New--York, 1966.




\bibitem{kopot}
I.~V.~Kovalishina and V.~P.~Potapov, {\em Seven Papers Translated from
the Russian}, American Mathematical Society Translations {\bf 138},
Providence, R.I., 1988.


\bibitem{RR} M. Rosenblum and J. Rovnyak,
\emph{Hardy Classes and Operator Theory}, Oxford University Press, 1985.


\bibitem{Schur}
I.~Schur, \emph{\"Uber Potenzreihen, die im Innern des
Einheitskreises beschr\"ankt sind. I}, J. Reine Angew. Math. {\bf 147}
(1917), 205­-232.

\end{thebibliography}
\providecommand{\bysame}{\leavevmode\hbox to3em{\hrulefill}\thinspace}

\end{document}